\newtheorem{theorem}{Theorem}
\newtheorem{remark}{Remark}
\newtheorem{corollary}{Corollary}
\begin{document}

\title{Circular Microalgae-Based Carbon Control for Net Zero}

\author{Federico~Zocco, Joan Garc\'ia, and Wassim M. Haddad
\thanks{F.~Zocco is with the Research Centre in Sustainable Energy, School of Mechanical and Aerospace Engineering, Queen's University Belfast, Northern Ireland, and with the Centre for Sustainable Manufacturing and Recycling Technologies (SMART), School of Mechanical, Electrical and Manufacturing Engineering, Loughborough University, England, UK. Email: federico.zocco.fz@gmail.com.}
\thanks{\emph{(Corresponding authors: Federico Zocco)}}%
\thanks{J.~Garc\'ia is with GEMMA - Group of Environmental Engineering and Microbiology, Department of Civil and Environmental Engineering, Universitat Polit\`ecnica de Catalunya - BarcelonaTech,
c/ Jordi Girona 1-3, Building D1, E-08034 Barcelona, Spain. Email: joan.garcia@upc.edu.}
\thanks{W. M. Haddad is with the School of Aerospace Engineering, Georgia Institute of Technology, Atlanta, Georgia, USA. Email: wm.haddad@aerospace.gatech.edu.}}

\maketitle

\begin{abstract}
The alteration of the climate in various areas of the world is of increasing concern since climate stability is a necessary condition for human survival as well as every living organism. The main reason of climate change is the greenhouse effect caused by the accumulation of carbon dioxide in the atmosphere. In this paper, we design a networked system underpinned by compartmental dynamical thermodynamics to circulate the atmospheric carbon dioxide. Specifically, in the carbon dioxide emitter compartment, we develop an initial-condition-dependent finite-time stabilizing controller that guarantees stability within a desired time leveraging the system property of affinity in the control. Then, to compensate for carbon emissions we show that a cultivation of microalgae with a volume 625 times bigger than the one of the carbon emitter is required. To increase the carbon uptake of the microalgae, we implement the nonaffine-in-the-control microalgae dynamical equations as an environment of a state-of-the-art library for reinforcement learning (RL), namely, Stable-Baselines3, and then, through the library, we test the performance of eight RL algorithms for training a controller that maximizes the microalgae absorption of carbon through the light intensity. All the eight controllers increased the carbon absorption of the cultivation during a training of 200,000 time steps with a maximum episode length of 200 time steps and with no termination conditions. This work is a first step towards approaching net zero as a classical and learning-based network control problem. The source code is publicly available\footnotemark{}\footnotetext{\url{https://github.com/fedezocco/TMN5compCO2-SciPy}}.
\end{abstract}

\begin{IEEEkeywords}
Compartmental dynamical thermodynamics, finite-time stability, reinforcement learning for control, thermodynamical material networks, circular intelligence. 
\end{IEEEkeywords}

\IEEEpeerreviewmaketitle

\section{Introduction}

The European Environment Agency reports that, between 1980 and 2020, weather- and climate-related events amounted to between 85,000 and 145,000 fatalities \cite{EEA}. These numbers are small if compared, for example, with fatalities caused by the influenza virus, which are estimated to be at least 40,000 each year in the European Union \cite{ECDCreport}. However, since the annual average anomaly of temperature keeps increasing from 0.85 $^\circ$C in 2018 to 0.89 $^\circ$C in 2022 as monitored by NASA \cite{NASAanomaly}, the occurrences of extreme weather phenomena are expected to rise in the coming years as reported recently: these include extreme precipitations in Northeast United States \cite{jong2023increases}, rainfalls in UK \cite{kendon2023variability}, heat waves worldwide \cite{perkins2020increasing}, and heat waves in the Mediterranean basin \cite{molina2020future}. Hence, strategies to control the atmospheric carbon dioxide are increasingly needed to prevent a climate event horizon, after which limiting a given climate impact becomes geophysically infeasible \cite{aengenheyster2018point}. The term ``net zero'' is typically used by governments and academia to indicate the desired situation in which the carbon dioxide stops to accumulate in the atmosphere \cite{NZdefinition,EU-netZero}.    

The circular economy paradigm is gaining interest as a solution to simultaneously reduce material supply uncertainties and pollution \cite{UN-CE,EU-CE,US-CE}. The practices of a circular economy are, for example, reduce, reuse, repair, and recycle \cite{potting2017circular}. These practices can be applied to any material, with a priority given to those considered as critical \cite{GAUSTAD201824,CRMs-EU}. In particular, the application of the ``reuse'' and ``recycle'' practices to carbon dioxide can reduce its accumulation in the atmosphere by \emph{recirculating} it, and hence, facilitate the achievement of the net zero target mentioned above.         
        
In this context, this paper makes the following contributions: 
\begin{itemize}
\item{We design a network leveraging compartmental dynamical thermodynamics \cite{haddad2019dynamical} to \emph{circulate} the atmospheric carbon dioxide for reaching net zero (covered in Sections \ref{sec:netDesign}, \ref{sec:numStudy}, and \ref{sec:RLcontrol}). While we consider the particular case of an anaerobic digester as the carbon source, our network design approach is generalizable and scalable; it is general because it is based on thermodynamics \cite{haddad2017thermodynamics} and it is scalable because it is based on the discretization of the problem into thermodynamic compartments that can be added, removed, and modified as appropriate \cite{zocco2023thermodynamical,zocco2024unification}.} 
\item{We revised the finite-time stabilizing controller proposed in \cite{haddad2015finite} to achieve an \emph{initial-condition-dependent} formulation valid for a family of Lyapunov candidate functions and implemented it to regulate the carbon emitter (Section \ref{sec:netDesign}).}
\item{We designed and compared eight reinforcement-learning (RL) controllers trained to maximize the uptake of carbon dioxide of the microalgae through the light intensity (Section \ref{sec:RLcontrol}). To the best of our knowledge, this is the first work that approaches net zero as a classical and learning-based network control problem.}
\end{itemize}

Throughout the paper, matrices and vectors are indicated with bold capital and lower case letters, respectively, while sets are indicated with calligraphic letters. 

The rest of the paper is organized as follows. Section \ref{sec:relWork} discusses the related work, Section \ref{sec:netDesign} covers the design of the network, Section \ref{sec:numStudy} considers a numerical study, Section \ref{sec:RLcontrol} reports the design of the RL controllers for the microalgae system, and finally Section \ref{sec:concl} gives the conclusions. %

\section{Related Work}\label{sec:relWork}
\subsection{Plants for Greenhouse Effect Mitigation}
To limit the carbon accumulation in the atmosphere, three main methods are possible, namely, reducing the use of combustion for energy production, removing $\text{CO}_2$ from the atmosphere, and capturing $\text{CO}_2$ at the source before it enters the atmosphere \cite{benemann1997co2}. Traditional strategies focus on mitigating the emissions at the source \cite{shafique2020overview}; Shafique \textit{et al.} \cite{shafique2020overview} provide an overview of carbon sequestration using plants installed on urban roofs. In contrast, our paper considers the adversarial actions of a source and a sink to regulate the resulting carbon concentration in the atmosphere. Zhang \textit{et al.} \cite{zhang2022urban} assessed whether urban green spaces in China were sinks or sources of carbon using a life-cycle assessment (LCA) approach. They found that trees and shrubs were sinks, whereas lawns were sources due to required maintenance. Marchi \textit{et al.} \cite{marchi2015carbon} developed a model of a vertical greenery system and they compared the sequestration efficiency of different types of plants. Wang \textit{et al.} \cite{wang2021promoting} evaluated urban planting designs considering the sequestration efficiency of plants. Medium-sized evergreen trees were among the best performing species.    
    
The direct injection of $\text{CO}_2$ into the deep oceans, aquifers or depleted oil wells is a method for large-scale carbon sequestration, but it requires particular geological conditions \cite{zhou2017bio}. In contrast, forestation is a more natural process than artificial injection, but it has limited sequestration capacity unless large land areas are covered \cite{zhou2017bio}. Chemical absorption through neutralization of carbonic acid to form carbonates or bicarbonates provides a safe and permanent sequestration, but it is energy and cost intensive \cite{zhou2017bio}. In this paper, we focus on sequestration using microalgae, which is more efficient than forestation. 

Currently, the disadvantages of microalgae are the relatively high costs and their sensitivity to toxic substances in exhaust gases \cite{zhou2017bio}. To address the issue of cost, Ramaraj \textit{et al.} \cite{ramaraj2015biomass} studied the growth of algae in natural water, which is cheaper than their cultivation in an artificial medium; the volumetric carbon uptake rate was $175 \pm 27.86 \text{ mg}\text{L}^{-1}\text{d}^{-1}$. Viswanaathan \textit{et al.} \cite{viswanaathan2022integrated} tabulated the $\text{CO}_2$ uptake of different species of microalgae.  

Another benefit of microalgae-based sequestration is their use as biomass to produce biofuels \cite{arun2021technical}. Hence, plant-based sequestration methods may facilitate a circular flow of material compared to non-biological methods, i.e., they have a life-cycle that easily aligns with the circularity principles detailed in \cite{suarez2019operational}. It is estimated that the major costs in biofuel production from algal cultivations are 77\% from culturing, 12\% from harvesting, and 7.9\% from lipid extraction \cite{sarwer2022algal}.

\subsection{Thermodynamical Material Networks for a Circular Economy}
At the core of any engineering subject there is the application of principles of physics and chemistry to develop a mathematical description of the target system. One of the most delicate steps is defining the conditions and simplifications that make the mathematical description sufficiently accurate for the desired purpose, but also simple enough to conclude with quantitative results and meaningful physical interpretations. This engineering approach is systematically used to design machines and processes, whereas it is rarely used for tackling whole-system problems such as the design of circular supply-recovery chains in a circular economy \cite{EMAFund}. Indeed, to date, circular-economy-related models are usually developed with data-analysis techniques such as LCA \cite{amicarelli2022life,walker2020life,xia2022review,cucurachi2019life,sala2021evolution} and material flow analysis (MFA) \cite{luan2021dynamic,li2022uncovering,sieber2020dynamic,liu2021dynamic,eriksen2020dynamic}. 

Thermodynamical material networks (TMNs) were recently proposed \cite{zocco2023thermodynamical,zocco2024unification,zocco2022circularity,zocco2024circular} to develop circular-economy models using the traditional engineering approach described above. Specifically, the methodology of TMNs is a generalization of the design of the Rankine cycle, in which mass and energy balances are applied to each compartment of the cycle and where the compartments are connected to form a network that delivers the desired flow of material in space and time (the material is the working fluid in the Rankine cycle). In contrast with data-analysis techniques such as LCA and MFA, TMNs are based on ordinary differential equations derived from dynamical mass and energy balances applied to each thermodynamic compartment of the network, thus increasing the accuracy of the models while being less data intensive \cite{zocco2023thermodynamical,zocco2022circularity}. Moreover, since based on differential equations, the design of TMNs can include one or more control systems. In this paper, we illustrate the use of the TMN methodology for circular microalgae-based carbon control, where the circularity of carbon dioxide is quantified using the TMN-based definition of circularity given in \cite{zocco2024circular} (specifically, in Definition 4), namely, $\lambda(\mathcal{N})$, with $\mathcal{N}$ the TMN processing the target material (carbon dioxide in this case) and with $\lambda(\mathcal{N}) \in (-\infty,0]$. Hence, the circularity of carbon dioxide reduces to the following arg-max problem \cite{zocco2024circular}:
\begin{equation}\label{eq:circProblem}
\mathcal{N}^* = \arg \max \,\,\, \lambda(\mathcal{N}).  
\end{equation}

\subsection{Finite-Time Stabilizing Control}
An essential requirement of a reliable process or system is to work in the desired conditions. If the dynamics of the process or system is described by a set of ordinary differential equations (ODEs), then this essential requirement is met if the system state can be stabilized to the desired conditions. Moreover, in practice, a further requirement is to reach the desired point within a finite time rather than merely asymptotically. The satisfaction of these requirement has led to the development of \emph{finite-time stabilizing} controllers, the first of which was proposed in \cite{Roxin1966} and subsequently extended for time-varying nonlinear dynamical systems in \cite{moulay2008finite,haddad2008finite}, for second-order systems in \cite{bhat1998continuous}, and using output feedback in \cite{hong2001output}.    

If, in addition, the controller solves an optimal control problem, the closed-loop nonlinear system is guaranteed to stabilize to the desired conditions both optimally and within a finite time. Such controllers were proposed in \cite{haddad2015finite} for continuous-time systems, \cite{haddad2023finite} for discrete-time systems, and \cite{lee2023finite} for stochastic discrete-time systems. In this paper, we modify the framework for continuous-time dynamical systems \cite{haddad2015finite} to develop a controller that guarantees optimal finite-time stabilization within a \emph{chosen} time.

\subsection{Reinforcement Learning for Continuous-Time Control}          
The advances in deep learning over the last ten years have led to state-of-the-art reinforcement learning (RL) algorithms based on deep neural networks. In 2015, Schulman \emph{et al.} \cite{pmlr-v37-schulman15} proposed the trust region policy optimization (TRPO)  algorithm for optimizing large nonlinear policies using a convolutional neural network with three layers as the policy and processing raw images directly to solve benchmark games. In 2016, Mnih \emph{et al.} \cite{mnih2016asynchronous} introduced asynchronous gradient descent showing that executing multiple agents in parallel on multiple instances of the environment has a stabilizing effect on training and it can be an alternative to the successful, but memory intensive, experience replay. In the same year, Lillicrap \emph{et al.} \cite{lillicrap2019continuouscontroldeepreinforcement} proposed an actor-critic approach based on the policy gradient algorithm which, with the same network architecture and hyperparameters, solved more than twenty simulated physics tasks. In 2017, Schulman \emph{et al.} \cite{schulman2017proximal} combined the data efficiency and reliability of TRPO while using only first-order optimization and alternate sampling data from the environment with optimizing the objective function via stochastic gradient ascent. The following year, Mania \emph{et al.} \cite{NEURIPS2018_7634ea65} aimed to significantly simplify the overall approach to RL since the complexity of the existing algorithms was, at that time, one of the major barriers to the deployment of RL in controlling real physical systems; they named their algorithm augmented random search (ARS) since the work resulted from an augmentation of a basic random search. Recently, Bhatt \emph{et al.} \cite{bhattcrossq} improved sample efficiency by properly using batch normalization and removing the target networks, Kokolakis \emph{et al.} \cite{kokolakis2023fixed} developed a critic-only RL algorithm for learning the solution to the steady-state Hamilton-Jacobi-Bellman equation in fixed-time, while Abel \emph{et al.} \cite{abel2024definition} provided a careful definition of the emerging concept of \emph{continual} RL as an evolution of the common view of learning as ``finding a solution'' to ``endless adaptation''.

\section{Network Design}\label{sec:netDesign}
The goal of our network is to mitigate the carbon dioxide concentration in the atmosphere using microalgae. To do so, we designed a TMN following the methodology proposed in \cite{zocco2023thermodynamical} (specifically, see Section IV of \cite{zocco2023thermodynamical}), which consists of three main steps. The three steps for this design are detailed as follows. While we consider the particular case of an anaerobic digester as the source of $\text{CO}_2$ emissions, the design approach could be used also with other sources. An anaerobic digester produces biogas which is composed by 60\% methane and 40\% $\text{CO}_2$.

\subsection{Step 1}\label{subsec:Step1} 
As the goal of the network is the local mitigation of carbon dioxide concentration in the neighborhood of an anaerobic digester, we chose $\text{CO}_2$ as the target material. Hence, in this case and in the notation of \cite{zocco2023thermodynamical},
\begin{equation}
\mathcal{B} = \{\beta_1\}, \quad n_\beta = 1,
\end{equation}
where $\beta_1$ is $\text{CO}_2$. 
 
\subsection{Step 2} 
To design the flow of $\mathcal{B}$, we consider the five thermodynamic compartments shown in Fig. \ref{fig:RealisticRepresentation} depicting an anaerobic digester, the atmosphere, a microalgae cultivation located in the neighborhood of the reactor, and two virtual ducts for the exchange of $\text{CO}_2$. One virtual duct is between the reactor and the atmosphere, while the other one is between the microalgae cultivation and the atmosphere. The red arrows indicate the direction of $\text{CO}_2$ flow in each virtual duct.                        
\begin{figure}
\begin{subfigure}{0.5\textwidth}
\centering
\includegraphics[width=0.9\textwidth]{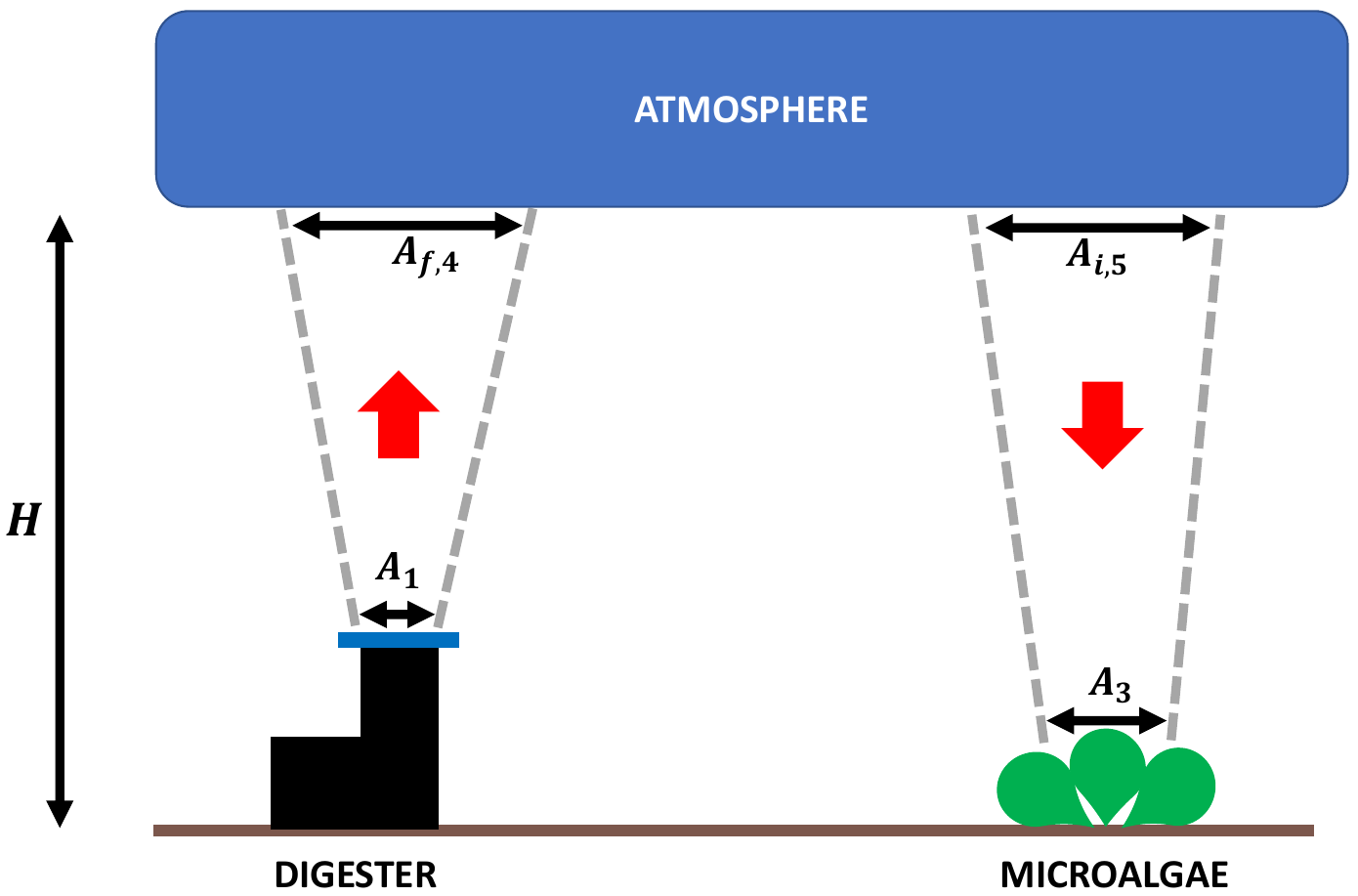}
\caption{Physical representation and geometry of the network to be designed.}
\label{fig:RealisticRepresentation}
\end{subfigure}
\begin{subfigure}{0.5\textwidth}
\centering
\includegraphics[width=0.3\textwidth]{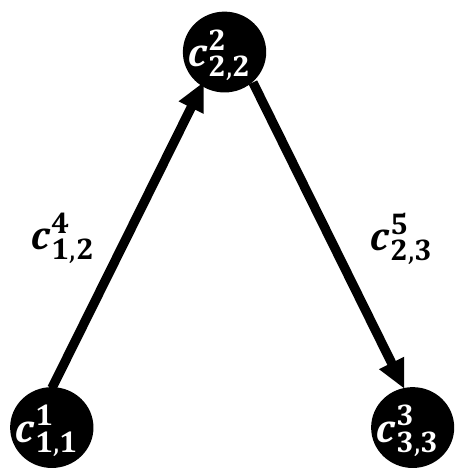}
\caption{Compartmental digraph corresponding to the system in (a).}
\label{fig:CompartmentalDigraph}
\end{subfigure}
\caption{Proposed carbon dioxide network design: Physical representation with geometry in (a) and its compartmental digraph in (b). The red arrows indicate the flow of carbon dioxide.}
\label{fig:TheNetwork}
\end{figure}

Hence, in this case and in the notation of \cite{zocco2023thermodynamical}, 
\begin{equation}
\mathcal{N} = \{c^1_{1,1}, c^2_{2,2}, c^3_{3,3}, c^4_{1,2}, c^5_{2,3}\} 
\end{equation}
with the compartmental digraph shown in Fig. \ref{fig:CompartmentalDigraph}. The atmosphere, the digester, and the cultivation are the vertex-compartments, while the two virtual ducts are the arc-compartments. Note that an alternative configuration could be achieved by connecting the digester directly with the microalgae so that $\text{CO}_2$ does not spread into the atmosphere.

\subsection{Step 3} In this paper, the dynamics of each compartment $c^k_{i,j} \in \mathcal{N}$ is derived from a mass balance. Firstly, the dynamical balances of each compartment are detailed. Then, the compartmental controller of the anaerobic digester (i.e., $c^1_{1,1}$) is developed.

\subsubsection{Anaerobic digester (i.e., $c^1_{1,1}$)} 
A mass balance of an anaerobic digester is given in \cite{bernard2001dynamical,campos2019hybrid}. The model considers two main reactions: the organic substrate $S_1(t)$ transforms into volatile fatty acids $S_2(t)$ through acidogenic bacteria $X_1(t)$, then the volatile fatty acids $S_2(t)$ transform into methane and $\text{CO}_2$ through methanogenic bacteria $X_2(t)$. The total inorganic carbon concentration $C(t)$ and the total alkalinity $Z(t)$ complete the system of six ODEs for the anaerobic digestion given by       
\begin{equation}
\begin{aligned}
\dot{X}_1(t) = {} & \left[\mu_1(\bm{x}(t)) - \alpha D_1(t)\right]X_1(t), \label{eq:digester1}\\
& X_1(0) = X_{1,0}, \quad t \geq 0,\\
\end{aligned}
\end{equation}
\begin{equation}
\begin{aligned}
\dot{X}_2(t) = {} & \left[\mu_2(\bm{x}(t)) - \alpha D_2(t)\right]X_2(t), \label{eq:digester2}\\
& X_2(0) = X_{2,0},
\end{aligned}
\end{equation}
\begin{equation}
\begin{aligned}
\dot{S}_1(t) = {} & D_3(t) \left(S_{1\text{in}} - S_1(t)\right) - k_1 \mu_1(\bm{x}(t))X_1(t), \label{eq:digester3}\\
& S_1(0) = S_{1,0},
\end{aligned}
\end{equation}
\begin{equation}
\begin{aligned}
\dot{S}_2(t) = {} & D_4(t) \left(S_{2\text{in}} - S_2(t)\right) + k_2 \mu_1(\bm{x}(t))X_1(t) \\
& - k_3 \mu_2(\bm{x}(t))X_2(t), \label{eq:digester4}\\
& S_2(0) = S_{2,0},
\end{aligned}
\end{equation}
\begin{equation}
\begin{aligned}
\dot{Z}(t) = {} & D_5(t) \left(Z_{\text{in}} - Z(t)\right), \label{eq:digester5}\\
& Z(0) = Z_0,
\end{aligned}
\end{equation}
\begin{equation}
\begin{aligned}
\dot{C}(t) = {} & D_6(t)\left(C_{\text{in}} - C(t)\right) - \dot{m}_{1,2}(\bm{x}(t)) \\
& + k_4\mu_1(\bm{x}(t))X_1(t) + k_5 \mu_2(\bm{x}(t))X_2(t), \label{eq:digester6} \\
& C(0) = C_0,
\end{aligned}
\end{equation}
where $\bm{x}(t) = \left[X_1(t), X_2(t), S_1(t), S_2(t), Z(t), C(t)\right]^\top$ is the system state,
\begin{equation}
\mu_1(\bm{x}(t)) = \mu_{1\text{max}} \frac{S_1(t)}{S_1(t) + K_{\text{S1}}},
\end{equation}
$D_j(t)|_{j = 1, 2, \dots, 6}$ is the dilution rate for the $j$-th state,
\begin{equation}
\mu_2(\bm{x}(t)) = \mu_{2\text{max}} \frac{S_2(t)}{S_2(t) + K_{\text{S2}} + (S_2(t)/K_{\text{I2}})^2}, 
\end{equation}
\begin{equation}\label{eq:CO2flow}
\begin{aligned}
\dot{m}_{1,2}(\bm{x}(t)) = {} & f_\text{r} \{k_{\text{L}}a\left[C(t) + S_2(t) - Z(t) \right. \\
& \left. - K_{\text{H}}P_\text{C}(\bm{x}(t))\right]\} \quad \left(\frac{\text{mmol}}{\text{Ld}}\right),
\end{aligned}
\end{equation}
and $\alpha, S_{1\text{in}}$, $S_{2\text{in}}$, $k_1$, $k_2$, $k_3$, $k_4$, $k_5$, $Z_{\text{in}}$, $C_{\text{in}}$, $\mu_{1\text{max}}$, $\mu_{2\text{max}}$, $K_{\text{S1}}$, $K_{\text{S2}}$, $K_{\text{I2}}$, $k_{\text{L}}a$, and $K_{\text{H}}$ are constant system parameters detailed in \cite{bernard2001dynamical,campos2019hybrid}. The $\text{CO}_2$ flow produced by the anaerobic digester is $\dot{m}_{1,2}(\bm{x}(t))$ and it is given in (\ref{eq:CO2flow}), where 
\begin{equation}
\begin{aligned}
P_\text{C}(\bm{x}(t)) = {} & \frac{\phi(\bm{x}(t))}{2K_{\text{H}}} \\ 
& - \frac{\sqrt{\phi^2(\bm{x}(t)) - 4 K_\text{H}P_{\text{T}} (C(t) + S_2(t) - Z(t))}}{2K_{\text{H}}} 
\end{aligned}
\end{equation} 
and
\begin{equation}
\begin{aligned}
\phi(\bm{x}(t)) = {} & C(t) + S_2(t) - Z(t) + K_{\text{H}}P_{\text{T}} \\ 
& + \frac{k_6}{k_{\text{L}}a}\mu_2(\bm{x}(t))X_2(t),
\end{aligned}
\end{equation}
with the constants $P_{\text{T}}$ and $k_6$ detailed in \cite{bernard2001dynamical,campos2019hybrid}. The multiplying factor $f_\text{r}$ takes into account the fact that, by UK law, anaerobic digestion plants must capture at least 80\% of carbon dioxide \cite{UKlawDigesters}. Therefore, $f_\text{r} \leq 0.2$. The carbon flow $\dot{m}_{1,2}(\bm{x}(t))$ produced by the digester flows inside the arc-compartment $c_{1,2}^4$ (modeled as a virtual duct of length $H$) and reaches the vertex-compartment $c^2_{2,2}$, i.e., the atmosphere.

\subsubsection{Microalgae (i.e., $c^3_{3,3}$)}
A well-known model of microalgae growth is given by Monod \cite{vatcheva2006experiment}, which can be applied to mass balances \cite{marcos2004output} (hence, it can be seen as a compartment of a TMN). In Monod's model, the growth rate $\mu$ of the culture depends on limiting factors that enter the model as multiplying terms \cite{solimeno2015new}. In this paper, we assume two limiting factors, namely, the $\text{CO}_2$ and the light intensity $I(t)$. Hence, in this case, Monod's model takes the form     
\begin{equation}
\begin{aligned}
\dot{X}_{\text{ALG}}(t) = {} & \mu(S, I)X_{\text{ALG}}(t) - \frac{1}{T_{\text{h}}}X_{\text{ALG}}(t), \label{eq:MonodEq1}\\
& X_{\text{ALG}}(0) = X_{\text{ALG},0}, \quad t \geq 0,
\end{aligned}
\end{equation}
\begin{equation}
\begin{aligned}
\dot{S}(t) = {} & \frac{1}{T_{\text{h}}}\left(S_{\text{in}} - S(t) \right) - \rho(S, I)X_{\text{ALG}}(t)  \\
& S(0) = S_0,
\end{aligned}
\end{equation}
where
\begin{equation}\label{eq:MonodGrowthRate}
\mu(S, I) = \mu_{\text{ALG}} \frac{I(t)}{I(t) + K_{\text{sI}} + \frac{I^2(t)}{K_{\text{iI}}}}\frac{S(t)}{S(t) + K_{\text{S}}} \\
\end{equation}
and
\begin{equation}
\rho(S, I) = \frac{1}{Y} \mu(S, I). \label{eq:MonodEq4}
\end{equation}
Here $X_{\text{ALG}}(t)$ [$\mu \text{m}^3/\text{L}$] is the total amount of biomass per unit volume (the volume can be converted into mass through the density), $S(t)$ [$\mu \text{mol}/\text{L}$] is the concentration of remaining nutrients, $\mu$ is the growth rate, $\rho$ is the rate of the nutrient consumption, $\mu_{\text{ALG}}$ is the maximum growth rate, $I(t)$ is the light intensity, $T_{\text{h}}$ is the hydraulic retention time, $K_{\text{S}}$ is the half-saturation constant for substrate uptake, $K_{\text{iI}}$ and $K_{\text{sI}}$ are two coefficients modeling the light influence on microalgae growth taken from \cite{bernard2011hurdles}, $Y$ is the growth yield, and $S_{\text{in}}$ is the input nutrient concentration \cite{vatcheva2006experiment}. The light intensity $I(t)$ affects the growth rate (\ref{eq:MonodGrowthRate}) according to the equation in \cite{bernard2011hurdles}. 

As we aim to study the atmospheric $\text{CO}_2$ absorption through properly-controlled microalgae, we need to introduce the carbon dioxide absorption into the model (\ref{eq:MonodEq1})-(\ref{eq:MonodEq4}). The photosynthesis is performed by microalgae to convert radiant energy into chemical energy of microalgae tissues, therefore it determines the microalgae growth. The process of photosynthesis can be written as \cite{hall1999photosynthesis}
\begin{equation}
\text{CO}_2 + \text{H}_2\text{O} \xrightarrow[\text{microalgae}]{\text{light}} [\text{CH}_2\text{O}] + \text{O}_2, 
\end{equation}
where the light can be both from the sun or from an artificial source \cite{darko2014photosynthesis}. Let $\rho_{\text{CO}_2}$ be the rate of carbon dioxide consumption, which is a fraction of the uptake of all the nutrients, i.e., a fraction of $\rho(S, I)$ given in (\ref{eq:MonodEq4}). This can be formulated as 
\begin{equation}\label{eq:carbonUptake}
\rho_{\text{CO}_2}(S, I) = K_{\text{CO}_2} \rho(S, I),  
\end{equation}
where $K_{\text{CO}_2} \in (0, 1)$ is a constant quantifying which fraction of $\rho$ is carbon dioxide. The carbon dioxide uptake (\ref{eq:carbonUptake}) corresponds to the flow rate of $\text{CO}_2$ from the vertex-compartment $c^2_{2,2}$ to $c^3_{3,3}$, that is,
\begin{equation}
\dot{m}_{2,3}(S, I) = \rho_{\text{CO}_2}(S, I) \quad \left(\frac{\mu\text{mol}}{\mu\text{m}^3 \text{d}}\right).  
\end{equation}   
\begin{remark}
Along with the photosynthesis, microalgae carry out the process of respiration, which releases $\text{CO}_2$ to the atmosphere. Typically, the absorption of $\text{CO}_2$ in green plants is greater than its production \cite{solimeno2015new,hall1999photosynthesis}. We account for respiration by choosing a lower value of $K_{\text{CO}_2}$ rather than considering both the $\text{CO}_2$ absorption and production flows. For this reason, the flow in compartment $c^5_{2,3}$ is directed only from $c^2_{2,2}$ to $c^3_{3,3}$ (see Fig. \ref{fig:CompartmentalDigraph}). The respiration generates a flow in the opposite direction.       
\end{remark}

\subsubsection{Virtual ducts (i.e., $c^4_{1,2}$ and $c^5_{2,3}$)} 
We assume the realistic scenario in which the atmosphere is immediately above the digester and the microalgae, which corresponds to the case $H \approx 0$ in Fig. \ref{fig:RealisticRepresentation}. Hence, we have that
\begin{equation}
\lim_{H \to 0} A_{f,4} = A_1 \quad \text{and} \quad \lim_{H \to 0} A_{i,5} = A_3.  
\end{equation}
Since the virtual ducts have an infinitesimal length, their effect on the fluid is negligible.

\subsubsection{Atmosphere (i.e., $c^2_{2,2}$)}
The mass balance of $\text{CO}_2$ involves the carbon dioxide released by the anaerobic digestion plant and the microalgae absorption, that is, 
\begin{equation}\label{eq:AtmAccRate}
\begin{aligned}
\frac{\text{d}m_2(t)}{\text{d}t} = {} 
& \dot{m}_{1,2}(t) - \dot{m}_{2,3}(t) \\
& = f_\text{r}\{k_{\text{L}}a\left[C(t) + S_2(t) - Z(t) - K_{\text{H}}P_\text{C}\right]\} \\
& - K_{\text{CO}_2}\frac{\mu_{\text{ALG}}}{Y} \frac{I(t)}{I(t) + K_{\text{sI}} + \frac{I^2(t)}{K_{\text{iI}}}}\frac{S(t)}{S(t) + K_{\text{S}}}.
\end{aligned}  
\end{equation}

\subsubsection{Digester controller}
The products of the anaerobic digestion are mainly two gases: methane and carbon dioxide. As the target material of this network is carbon dioxide (as chosen in Section \ref{subsec:Step1}), we design a continuous-time, initial-condition-dependent optimal control to regulate the $\text{CO}_2$ output flow. The control input is the dilution rate $D(t)$. 

To develop our control architecture, we first recall a theorem from \cite{haddad2015finite}, which provides an architecture for an optimal finite-time stabilizing controller for affine dynamical systems of the form
\begin{equation}\label{eq:affineForm}  
\dot{\bm{x}}(t) = \bm{f}(\bm{x}(t)) + \bm{G}(\bm{x}(t))\bm{u}(t), \quad \bm{x}(0) = \bm{x}_0, \quad t \geq 0,
\end{equation}
where, for $t \geq 0$, $\bm{x}(t) \in \mathbb{R}^{n}, \bm{u}(t) \in \mathbb{R}^{l}$, and $\bm{f}:\mathbb{R}^{n} \rightarrow \mathbb{R}^{n}$ and $\bm{G}: \mathbb{R}^{n} \rightarrow \mathbb{R}^{n \times l}$ are such that $\bm{f}(\cdot)$ and $\bm{G}(\cdot)$ are continuous in $\bm{x}$ and $\bm{f}(0) = \bm{0}$. 
\begin{theorem}[\cite{haddad2015finite}]\label{th:baseline}
Consider the affine nonlinear dynamical system (\ref{eq:affineForm}). Assume that there exist a continuously differentiable, radially unbounded function $V : \mathbb{R}^{n} \rightarrow \mathbb{R}$ and real numbers $c > 0$ and $\beta \in (0, 1)$ such that the following conditions hold:
\begin{gather}
V(0) = 0, \label{eq:cond1}\\ 
V(\bm{x}) > 0, \quad \bm{x} \in \mathbb{R}^{n} \setminus \{0\}, \label{eq:cond2}\\ 
V^\prime(\bm{x}) \left[\bm{f}(\bm{x}) - \frac{1}{2}\bm{G}(\bm{x})\bm{R}^{-1}_2(\bm{x})\bm{L}^\top_2(\bm{x}) - \right. \notag\\ 
\left. \frac{1}{2} \bm{G}(\bm{x})\bm{R}^{-1}_2(\bm{x})\bm{G}^\top(\bm{x}) {V^{\prime}}^\top(\bm{x})\right] \leq \notag\\
- c(V(\bm{x}))^\beta, \quad \bm{x} \in \mathbb{R}^{n}, \label{eq:cond3}\\ 
\bm{L}_2(0) = 0, \label{eq:cond4} 
\end{gather}
where $\bm{L}_2 : \mathbb{R}^{n} \rightarrow \mathbb{R}^{1 \times l}$ is continuous on $\mathbb{R}^{n}$ and $\bm{R}_2(\bm{x}) > 0$ is continuous on $\mathbb{R}^{n}$. Then, with the feedback control 
\begin{equation}\label{eq:uOld}
\bm{u}(t) = \bm{\phi}(\bm{x}) = - \frac{1}{2}\bm{R}^{-1}_2(\bm{x})\left[\bm{L}_2(\bm{x}) + V^\prime(\bm{x})\bm{G}(\bm{x})\right]^\top,   
\end{equation}
the zero solution $\bm{x}(t) \equiv 0$ to the affine dynamical system (\ref{eq:affineForm}) is globally finite-time stable. Moreover, there exists a settling-time function $T : \mathbb{R}^{n} \rightarrow [0, \infty)$ such that
\begin{equation}\label{eq:settTimeOriginal}
T(\bm{x}_0) \leq \frac{1}{c(1-\beta)} (V(\bm{x}_0))^{1-\beta}, \quad \bm{x}_0 \in \mathbb{R}^{n}, 
\end{equation}
and the performance functional 
\begin{gather}
J(\bm{x}_0, \bm{u}(\cdot)) = \int_0^\infty \left[L_1(\bm{x}) + \bm{L}_2(\bm{x})\bm{u}(t) \right. \notag\\ 
\left. + \bm{u}^\top(t)\bm{R}_2(\bm{x})\bm{u}(t)\right] \textup{d}t \label{eq:JOld}
\end{gather}
is minimized, with
\begin{equation}\label{eq:L1Old}
L_1(\bm{x}) = \bm{\phi}^\top(\bm{x})\bm{R}_2(\bm{x})\bm{\phi}(\bm{x}) - V^\prime(\bm{x})\bm{f}(\bm{x}).
\end{equation}
\end{theorem}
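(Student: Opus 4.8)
The plan is to recognize this as a combination of a Lyapunov finite-time argument with a standard inverse-optimality (completion-of-squares) computation, and to split the proof into three parts: closed-loop finite-time stability, the settling-time estimate, and minimization of the performance functional. First I would substitute the feedback law \eqref{eq:uOld} into the closed-loop vector field $\bm{f}(\bm{x}) + \bm{G}(\bm{x})\bm{\phi}(\bm{x})$ and evaluate the Lyapunov derivative $\dot{V} = V'(\bm{x})[\bm{f}(\bm{x}) + \bm{G}(\bm{x})\bm{\phi}(\bm{x})]$. Expanding $\bm{G}(\bm{x})\bm{\phi}(\bm{x})$ and grouping terms shows that this derivative coincides exactly with the bracketed expression on the left-hand side of condition \eqref{eq:cond3}, so that $\dot{V}(\bm{x}) \leq -c(V(\bm{x}))^\beta$. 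Combined with \eqref{eq:cond1}, \eqref{eq:cond2}, the continuous differentiability, and the radial unboundedness of $V$, this differential inequality is the classical sufficient condition for global finite-time stability of the zero solution.

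For the settling-time bound I would apply the comparison lemma: setting $v(t) = V(\bm{x}(t))$, the inequality $\dot{v} \leq -c v^\beta$ is dominated by the scalar flow $\dot{y} = -c y^\beta$ with $y(0) = V(\bm{x}_0)$. Integrating the separable scalar equation gives $y(t)^{1-\beta} = (V(\bm{x}_0))^{1-\beta} - c(1-\beta)t$, which vanishes precisely at $t = (c(1-\beta))^{-1}(V(\bm{x}_0))^{1-\beta}$; since $v(t) \leq y(t)$, the state reaches the origin no later than this time, which is exactly the estimate \eqref{eq:settTimeOriginal}.

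The optimality claim is where most of the algebra lives and is the part I would treat most carefully. I would introduce the Hamiltonian $H(\bm{x}, \bm{u}) = L_1(\bm{x}) + \bm{L}_2(\bm{x})\bm{u} + \bm{u}^\top \bm{R}_2(\bm{x})\bm{u} + V'(\bm{x})[\bm{f}(\bm{x}) + \bm{G}(\bm{x})\bm{u}]$ and establish two identities. Substituting the definition \eqref{eq:L1Old} of $L_1$ and using the relation $(\bm{L}_2(\bm{x}) + V'(\bm{x})\bm{G}(\bm{x})) = -2\bm{\phi}^\top(\bm{x})\bm{R}_2(\bm{x})$ implied by \eqref{eq:uOld}, a direct computation gives first $H(\bm{x}, \bm{\phi}(\bm{x})) = 0$, i.e., the Hamilton--Jacobi--Bellman equation holds along the feedback, and second the completion of squares $H(\bm{x}, \bm{u}) = (\bm{u} - \bm{\phi}(\bm{x}))^\top \bm{R}_2(\bm{x})(\bm{u} - \bm{\phi}(\bm{x})) \geq 0$, where nonnegativity uses $\bm{R}_2(\bm{x}) > 0$. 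Then, for any admissible control steering the state to the origin, I would write the Lagrangian along trajectories as $L_1 + \bm{L}_2 \bm{u} + \bm{u}^\top \bm{R}_2 \bm{u} = H(\bm{x}, \bm{u}) - \dot{V}$ and integrate from $0$ to $\infty$; the telescoping term $-\int_0^\infty \dot{V}\,\mathrm{d}t = V(\bm{x}_0) - \lim_{t\to\infty} V(\bm{x}(t)) = V(\bm{x}_0)$ yields $J(\bm{x}_0, \bm{u}(\cdot)) = V(\bm{x}_0) + \int_0^\infty (\bm{u} - \bm{\phi})^\top \bm{R}_2 (\bm{u} - \bm{\phi})\,\mathrm{d}t \geq V(\bm{x}_0)$, with equality exactly for $\bm{u} = \bm{\phi}$, so the feedback minimizes $J$ and the optimal cost equals $V(\bm{x}_0)$.

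The main obstacle I anticipate is not the algebra but the precise specification of the admissible control class: the integral identity and the limit $V(\bm{x}(t)) \to 0$ are valid only for controls that asymptotically (here, finite-time) stabilize the origin, so the comparison must be confined to such controls, and one must confirm that \eqref{eq:uOld} itself belongs to that class, which is guaranteed by the finite-time stability established in the first step. I would also verify continuity of $\bm{\phi}$, inherited from continuity of $\bm{f}$, $\bm{G}$, $\bm{L}_2$, $\bm{R}_2^{-1}$ together with the $C^1$-regularity of $V$, to ensure well-posedness of the closed-loop trajectories, noting that the non-Lipschitz behavior required for finite-time convergence enters through the exponent $\beta \in (0,1)$ rather than through any discontinuity of the feedback.
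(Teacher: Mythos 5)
The paper does not actually prove Theorem~\ref{th:baseline}; it is recalled verbatim from \cite{haddad2015finite}, and your argument reconstructs essentially the standard proof given there: the closed-loop Lyapunov derivative reduces to the left-hand side of (\ref{eq:cond3}), the comparison lemma on $\dot{v}\leq -cv^{\beta}$ yields global finite-time stability and the settling-time bound (\ref{eq:settTimeOriginal}), and the Hamiltonian identities $H(\bm{x},\bm{\phi}(\bm{x}))=0$ and $H(\bm{x},\bm{u})=(\bm{u}-\bm{\phi}(\bm{x}))^{\top}\bm{R}_2(\bm{x})(\bm{u}-\bm{\phi}(\bm{x}))\geq 0$ give inverse optimality with optimal cost $V(\bm{x}_0)$ over the class of admissible stabilizing controls. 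Your treatment of the admissibility class and of the non-Lipschitz feedback is the correct and needed care, so the proposal is sound and matches the source's approach.
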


To verify conditions (\ref{eq:cond1})-(\ref{eq:cond4}) for the six-state system (\ref{eq:digester1})-(\ref{eq:digester6}) of the digester, we adopted the expressions of $\bm{L}_2(\bm{x})$ and $\bm{R}^{-1}_2(\bm{x})$ proposed in \cite{zocco2023thermodynamical} (with $l$ = $n$) given by
\begin{equation}\label{L2choice}
\bm{L}_2(\tilde{\bm{x}}) = 2\left[\bm{f}^\top(\tilde{\bm{x}})\bm{G}(\tilde{\bm{x}})\right]
\end{equation}
and
\begin{equation}\label{eq:R2choice}
\bm{R}^{-1}_2(\tilde{\bm{x}}) = \bm{G}^{-1}(\tilde{\bm{x}})(\bm{G}^\top(\tilde{\bm{x}}))^{-1},
\end{equation}
where $\tilde{\bm{x}} \in \mathbb{R}^{n}$ is the system state translated to have the zero equilibrium solution $\tilde{\bm{x}}(t) \equiv \bm{0}$ corresponding to the desired operating equilibrium point ``SS6'' in \cite{campos2019hybrid}. With these choices for $\bm{L}_2(\tilde{\bm{x}})$ and $\bm{R}^{-1}_2(\tilde{\bm{x}})$, (\ref{eq:cond3}) simplifies as
\begin{equation}
-\frac{1}{2}V^\prime(\bm{x}) {V^\prime}^\top(\bm{x}) \leq -c(V(\bm{x}))^\beta,
\end{equation}
which is independent of $\bm{f}(\bm{x})$ and $\bm{G}(\bm{x})$. Moreover, (\ref{eq:cond4}) is satisfied regardless of the expressions of $\bm{f}(\bm{x})$ and $\bm{G}(\bm{x})$ since $\bm{f}(0) = \bm{0}$. However, this choice of $\bm{R}^{-1}_2(\tilde{\bm{x}})$ requires that $l = n$ and $\bm{G}^{-1}(\tilde{\bm{x}})$ exists. 
\begin{theorem}[Initial-condition-dependent control]\label{th:inCoDepCon}
Consider the affine nonlinear dynamical system (\ref{eq:affineForm}) with $l$ = $n$. Assume that there exists a continuously differentiable, radially unbounded function $V : \mathbb{R}^{n} \rightarrow \mathbb{R}$ of the form
\begin{equation}\label{eq:VofxChoice}
V(\bm{x}, \bm{x}_0, T_{\textup{max}}) = p(\bm{x}^\top\bm{x})^q,
\end{equation}
where
\begin{equation}\label{eq:pofTmax}
p(\bm{x}_0, T_{\textup{max}}) = \frac{1}{2}\left(\bm{x}^\top_0\bm{x}_0\right)^{\frac{1}{1+T_{\textup{max}}}} \left(\frac{1+T_{\textup{max}}}{T_{\textup{max}}}\right)^2,
\end{equation}
\begin{equation}\label{eq:qofTmax}
q(T_{\textup{max}}) = \frac{T_{\textup{max}}}{1 + T_{\textup{max}}},
\end{equation} 
and $T_{\textup{max}}$ is an upper bound of the settling-time $T$, i.e. $T \leq T_{\textup{max}}$. 
Then, with the feedback control 
\begin{equation}\label{eq:uNew}
\begin{aligned}
\bm{u}(t) = {} & \bm{\phi}(\bm{x}, \bm{x}_0) \\
& = -\frac{1}{2}\bm{G}^{-1}(\bm{x})\left[2\bm{f}(\bm{x}) + {V^\prime}^\top(\bm{x}, \bm{x}_0)\right],
\end{aligned}
\end{equation}
the zero solution $\bm{x}(t) \equiv 0$ to the affine dynamical system (\ref{eq:affineForm}) is globally fixed-time stable with settling-time $T \leq T_{\textup{max}}$. 
Moreover, the performance functional 
\begin{equation}
\begin{aligned}
J(\bm{x}_0, \bm{u}(\cdot)) = {} & \int_0^\infty \left[L_1(\bm{x}(t)) \right. \\
& \left. + 2\bm{f}^\top(\bm{x}(t))\bm{G}(\bm{x}(t))\bm{u}(t) \right. \\ 
& \left. + \bm{u}^\top(t)\bm{G}^\top(\bm{x}(t))\bm{G}(\bm{x}(t))\bm{u}(t)\right] \textup{d}t \label{eq:Jnew}
\end{aligned}
\end{equation}
is minimized, with
\begin{equation}\label{eq:L1new}
\begin{aligned}
L_1(\bm{x}, \bm{x}_0) = {} & \bm{\phi}^\top(\bm{x})\bm{G}^\top(\bm{x})\bm{G}(\bm{x})\bm{\phi}(\bm{x}) \\ 
& - V^\prime(\bm{x}, \bm{x}_0)\bm{f}(\bm{x}).
\end{aligned}
\end{equation}
Furthermore, in this case the closed-loop dynamics reduce to
\begin{equation}\label{eq:closedLoop}
\dot{\bm{x}}(t) = -\frac{1}{2}{V^\prime}^\top(\bm{x}(t), \bm{x}_0, T_{\textup{max}}).
\end{equation}
\end{theorem}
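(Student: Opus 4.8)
The plan is to obtain Theorem~\ref{th:inCoDepCon} as a specialization of Theorem~\ref{th:baseline} under the choices (\ref{L2choice}) and (\ref{eq:R2choice}), which are admissible here precisely because $l=n$ and $\bm{G}(\bm{x})$ is invertible; for each fixed $\bm{x}_0$ the function $V(\cdot,\bm{x}_0,T_{\textup{max}})$ is an ordinary $C^1$ Lyapunov function of $\bm{x}$, so Theorem~\ref{th:baseline} applies per initial condition and the $\bm{x}_0$-dependence is exactly the lever that lets us prescribe the settling time. First I would record the bookkeeping consequences of (\ref{L2choice})--(\ref{eq:R2choice}). Substituting $\bm{L}_2=2\bm{f}^\top\bm{G}$ and $\bm{R}_2^{-1}=\bm{G}^{-1}(\bm{G}^\top)^{-1}$ into (\ref{eq:uOld}) collapses the feedback to (\ref{eq:uNew}), since $[\bm{L}_2+V^\prime\bm{G}]^\top=\bm{G}^\top(2\bm{f}+{V^\prime}^\top)$ and the leading factor $\bm{G}^{-1}(\bm{G}^\top)^{-1}\bm{G}^\top$ telescopes to $\bm{G}^{-1}$. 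The same substitution gives $\bm{R}_2=\bm{G}^\top\bm{G}$, so the performance functional (\ref{eq:JOld}) and running cost (\ref{eq:L1Old}) become verbatim (\ref{eq:Jnew}) and (\ref{eq:L1new}), and minimization is inherited directly. Finally, inserting (\ref{eq:uNew}) into (\ref{eq:affineForm}) yields $\dot{\bm{x}}=\bm{f}-\tfrac{1}{2}(2\bm{f}+{V^\prime}^\top)=-\tfrac{1}{2}{V^\prime}^\top$, which is the reduced closed loop (\ref{eq:closedLoop}). Everything except the fixed-time claim is therefore immediate once the hypotheses of Theorem~\ref{th:baseline} are checked for the candidate $V$ in (\ref{eq:VofxChoice}).

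The crux is verifying conditions (\ref{eq:cond1})--(\ref{eq:cond4}) for $V=p(\bm{x}^\top\bm{x})^q$ and extracting the constants $c$ and $\beta$. Conditions (\ref{eq:cond1}) and (\ref{eq:cond2}) are immediate since $p,q>0$, radial unboundedness holds because $q>0$, and (\ref{eq:cond4}) follows from $\bm{f}(0)=\bm{0}$. For (\ref{eq:cond3}), which under (\ref{L2choice})--(\ref{eq:R2choice}) reduces to $-\tfrac{1}{2}V^\prime{V^\prime}^\top\le-cV^\beta$, I would compute $V^\prime=2pq(\bm{x}^\top\bm{x})^{q-1}\bm{x}^\top$, so that $-\tfrac{1}{2}V^\prime{V^\prime}^\top=-2p^2q^2(\bm{x}^\top\bm{x})^{2q-1}$. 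Substituting $\bm{x}^\top\bm{x}=(V/p)^{1/q}$ turns this into the \emph{exact} equality $\dot V=-cV^\beta$ with $c=2q^2p^{1/q}$ and $\beta=2-1/q$. With $q$ as in (\ref{eq:qofTmax}) one gets $1/q=1+1/T_{\textup{max}}$ and hence $\beta=1-1/T_{\textup{max}}$; the requirement $\beta\in(0,1)$ of Theorem~\ref{th:baseline} then pins the admissible regime to $T_{\textup{max}}>1$, which is the same condition that makes $V$ continuously differentiable at the origin (the gradient magnitude $\|V^\prime\|\propto\|\bm{x}\|^{2q-1}$ is continuous there only when $q>\tfrac12$), so I would flag this constraint explicitly.

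It then remains to show that the prefactor $p$ in (\ref{eq:pofTmax}) is engineered so that the bound (\ref{eq:settTimeOriginal}) collapses to the constant $T_{\textup{max}}$ for every $\bm{x}_0$. Substituting $c=2q^2p^{1/q}$, $1-\beta=1/T_{\textup{max}}$, and $V(\bm{x}_0)=p(\bm{x}_0^\top\bm{x}_0)^q$ into (\ref{eq:settTimeOriginal}) gives a bound proportional to $p^{\,1/T_{\textup{max}}-1/q}(\bm{x}_0^\top\bm{x}_0)^{q/T_{\textup{max}}}$. The key identities are $1/T_{\textup{max}}-1/q=-1$, reducing the $p$-dependence to $p^{-1}$, and $q/T_{\textup{max}}=1/(1+T_{\textup{max}})$, which matches the exponent of $\bm{x}_0^\top\bm{x}_0$ baked into $p$; the initial-condition factor therefore cancels entirely, leaving the residual constant $q^2\big((1+T_{\textup{max}})/T_{\textup{max}}\big)^2=1$ and hence $T(\bm{x}_0)\le T_{\textup{max}}$ uniformly. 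I expect this exponent-tracking to be the main obstacle: no single step is deep, but the forms of $p$ and $q$ in (\ref{eq:pofTmax})--(\ref{eq:qofTmax}) are reverse-engineered precisely to force these two cancellations simultaneously, and only once both are confirmed does the bound become initial-condition-free, justifying the ``fixed-time'' terminology and completing the proof.
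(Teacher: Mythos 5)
Your proposal is correct, and at its core it is the same argument as the paper's: specialize Theorem~\ref{th:baseline} via the choices (\ref{L2choice})--(\ref{eq:R2choice}), reduce condition (\ref{eq:cond3}) to $-\tfrac{1}{2}V^\prime {V^\prime}^\top \le -c(V)^\beta$, and exploit the exponent identities $2-\tfrac{1}{q} = 1-\tfrac{1}{T_{\textup{max}}}$ and the cancellation of the $\bm{x}_0$-dependence. The difference is one of direction: the paper runs the computation as a \emph{derivation} --- it imposes $c=(V(\bm{x}_0))^{1-\beta}$, takes the settling-time bound (\ref{eq:settTimeOriginal}) as an equality to get $\beta = 1-\tfrac{1}{T_{\textup{max}}}$, and then solves (\ref{eq:equalityWithpqTmax}) for $p$ and $q$ by matching state-dependent and state-independent terms --- whereas you run it as a \emph{verification}: taking $p$, $q$ as given, you compute $\dot V = -2q^2p^{1/q}V^{2-1/q}$ exactly, read off $c = 2q^2p^{1/q}$ and $\beta = 2-\tfrac{1}{q}$, and then substitute into (\ref{eq:settTimeOriginal}) to confirm that the $\bm{x}_0$-dependence cancels and the bound collapses to the constant $T_{\textup{max}}$. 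Your direction is logically the cleaner one for a proof of the stated theorem (the paper's derivation explains where $p$ and $q$ come from but never explicitly closes the loop by checking that the resulting settling-time bound is initial-condition-free), and you add two points the paper relegates elsewhere or omits: the constraint $T_{\textup{max}}>1$ (which the paper only records separately as Corollary~\ref{cor:Tmax}) and the observation that $T_{\textup{max}}>1$, equivalently $q>\tfrac{1}{2}$, is exactly what makes $V$ continuously differentiable at the origin --- a hypothesis of Theorem~\ref{th:baseline} that the paper's proof never checks.
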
      

\begin{proof}
First, note that with $c = (V(x_0))^{1-\beta}$, $V(\bm{x})$ given by (\ref{eq:VofxChoice}), and $\bm{L}_2(\bm{x})$ given by (\ref{L2choice}), (\ref{eq:cond1}), (\ref{eq:cond2}), and (\ref{eq:cond4}) are satisfied.    Hence, only condition (\ref{eq:cond3}) remains to be satisfied. To show this, note that with $\bm{R}^{-1}_2(\bm{x})$ given by (\ref{eq:R2choice}),  (\ref{eq:cond3}) reduces to  
\begin{equation}\label{eq:cond33reduced}
-\frac{1}{2}V^\prime(\bm{x}){V^\prime}^\top(\bm{x}) \leq - c(V(\bm{x}))^\beta, \quad \bm{x} \in \mathbb{R}^{n}.
\end{equation}
Now, taking (\ref{eq:settTimeOriginal}) as an equality with $c = (V(x_0))^{1-\beta}$ yields
\begin{equation}\label{eq:TmaxEquality}
T_{\textup{max}} = \frac{1}{1-\beta} \rightarrow \beta = 1 - \frac{1}{T_{\textup{max}}}. 
\end{equation}
With $c = (V(x_0))^{1-\beta}$, (\ref{eq:VofxChoice}), (\ref{eq:cond33reduced}), and (\ref{eq:TmaxEquality}) give
\begin{equation}\label{eq:equalityWithpqTmax}
2q^2p^{\frac{1}{q}}(V(\bm{x}))^{2 - \frac{1}{q}} = (V(\bm{x}_0))^{\frac{1}{T_{\textup{max}}}} (V(\bm{x}))^{1-\frac{1}{T_{\textup{max}}}}.
\end{equation}
Note that each side of (\ref{eq:equalityWithpqTmax}) can be divided into a state-dependent term and a state-independent term. Equating the state-dependent terms yields
\begin{equation}
2 - \frac{1}{q} = 1 - \frac{1}{T_{\textup{max}}},
\end{equation}
which leads to (\ref{eq:qofTmax}). Finally, inserting (\ref{eq:qofTmax}) into (\ref{eq:equalityWithpqTmax}) and equating the state-independent terms we obtain $p(\bm{x}_0, T_{\textup{max}})$ in (\ref{eq:pofTmax}). Equations (\ref{eq:uNew}), (\ref{eq:Jnew}), and (\ref{eq:L1new}) follow by inserting (\ref{L2choice}) and (\ref{eq:R2choice}) into (\ref{eq:uOld}), (\ref{eq:JOld}), and (\ref{eq:L1Old}), respectively, while (\ref{eq:closedLoop}) follows by inserting (\ref{eq:uNew}) into (\ref{eq:affineForm}).
\end{proof}

\begin{remark}
The initial-condition-dependent control can be seen as a framework which identifies a fixed-time optimal controller for each initial conditions $\bm{x}_0$. It has resulted from addressing a limitation of the finite-time formulation \cite{haddad2015finite} in which the settling-time depends on $\bm{x}_0$ (see (\ref{eq:settTimeOriginal})). Hence, choosing $c = (V(x_0))^{1-\beta}> 0$, the initial condition now appears explicitly in $p = p(\bm{x}_0)$ and thus in $\bm{\phi}(\bm{x}, \bm{x}_0)$. The advantage is that now the upper bound of the settling-time $T_{\textup{max}}$ is a tuning parameter independent of $\bm{x}_0$. However, it is important to note that (\ref{eq:uNew}) is not a pure state feedback controller since $\bm{x}_0$ enters $\bm{\phi}(\bm{x}(t))$ for $t \geq 0$; in contrast, for a pure state feedback control \cite{haddad2015finite} $\bm{x}_0$ enters $\bm{\phi}(\bm{x}(t))$ only for $t = 0$ when $\bm{\phi}(\bm{x}(0)) = \bm{\phi}(\bm{x}_0)$.      
\end{remark}

\begin{corollary}\label{cor:Tmax}
$T_{\textup{max}} \in (1, \infty)$.
\end{corollary}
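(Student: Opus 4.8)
The plan is to exploit the explicit relationship between $T_{\textup{max}}$ and the exponent $\beta$ that was already established in the course of proving Theorem~\ref{th:inCoDepCon}. Specifically, equation~(\ref{eq:TmaxEquality}) records $T_{\textup{max}} = \tfrac{1}{1-\beta}$, so the claim reduces entirely to determining the image of the scalar map $g(\beta) = \tfrac{1}{1-\beta}$ as $\beta$ ranges over the admissible interval. The only structural input I need is that Theorem~\ref{th:baseline} restricts $\beta$ to the open interval $(0,1)$, and this hypothesis is inherited by Theorem~\ref{th:inCoDepCon}.

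First I would verify that $g$ is well defined and continuous on $(0,1)$, which holds because the denominator $1-\beta$ does not vanish there. Then I would observe that $g'(\beta) = \tfrac{1}{(1-\beta)^2} > 0$ on $(0,1)$, so $g$ is strictly increasing; a continuous, strictly monotone function carries an open interval onto an open interval, which means it suffices to compute the two one-sided endpoint limits rather than analyze the behavior pointwise.

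Next I would evaluate $\lim_{\beta \to 0^{+}} g(\beta) = 1$ and $\lim_{\beta \to 1^{-}} g(\beta) = +\infty$. Combining strict monotonicity with these limits forces the image of $g$ over $(0,1)$ to be exactly the open, unbounded interval $(1,\infty)$, which is the assertion $T_{\textup{max}} \in (1,\infty)$.

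Because the argument is elementary I do not anticipate a genuine obstacle; the only subtlety worth flagging is the \emph{openness} of the left endpoint. Since $\beta = 0$ is excluded from the range in Theorem~\ref{th:baseline}, the boundary value $T_{\textup{max}} = 1$ is never attained, and since $\beta = 1$ is likewise excluded, $T_{\textup{max}}$ never reaches its supremum either. This confirms that the corollary's interval is open on the left and unbounded on the right, exactly as stated.
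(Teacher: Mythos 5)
Your proof is correct and follows the same route as the paper: both rest entirely on the identity $T_{\textup{max}} = \frac{1}{1-\beta}$ from equation~(\ref{eq:TmaxEquality}) together with the constraint $\beta \in (0,1)$ inherited from Theorem~\ref{th:baseline}. The paper simply declares the conclusion ``immediate,'' whereas you spell out the monotonicity and endpoint limits of $\beta \mapsto \frac{1}{1-\beta}$; this is the same argument, just with the elementary verification made explicit.
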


\begin{proof}
The result is immediate by noting that $T_{\text{max}} = \frac{1}{1 - \beta}$, where $\beta \in (0, 1)$.
\end{proof}

\section{Numerical Study}\label{sec:numStudy}
This section analyses the network using the numerical values listed in Table \ref{tab:valuesOfParam}. 
\begin{table}
\centering
\caption[Caption for LOF]{Values of the parameters used in the numerical study. Missing values are derived from the equations. The values are collected in the paper source code\footnotemark[\value{footnote}].}
\label{tab:valuesOfParam}
\renewcommand{\arraystretch}{2}
\begin{tabular}{cc} 
\hline
Compartment & Parameter values\\ 
\hline
\hline
\multirow{4}{*}{$c^1_{1,1}$} & $\tilde{\bm{x}}_{0,1} = [-1, 0.5, 1.0, 1.5, 0.8, -0.5]^\top$ \\
& $\tilde{\bm{x}}_{0,2} = [-1.5, 1.25, 0.4, 1.8, -1.8, -2.2]^\top$ \\
& $f_\text{r} = 0.15$ \\
& Remaining values taken from \cite{bernard2001dynamical,campos2019hybrid} \\
\hline 
\multirow{2}{*}{$c^3_{3,3}$} & $K_{\text{CO}_2} = 0.3$ \\
& Remaining values taken from \cite{vatcheva2006experiment,bernard2011hurdles} \\
\hline
\end{tabular}
\renewcommand{\arraystretch}{1}
\end{table}
The anaerobic digester is stabilized to the equilibrium ``SS6'' in \cite{campos2019hybrid} using the initial-condition-dependent controller (Theorem \ref{th:inCoDepCon}). The trajectories of the six state variables are shown in Fig. \ref{fig:DigesterStates}. Specifically, Fig. \ref{fig:DigesterStates_T_MAXchanges} shows that the origin (i.e., the equilibrium ``SS6'') is reached before the upper bound $T_{\text{max}}$ by all the state variables. In the case of $T_{\text{max}} = 3.5$ days, the origin is reached in approximately 0.5 days before $T_{\text{max}}$, whereas with $T_{\text{max}} = 1.1$ days the equilibrium is reached in the proximity of $T_{\text{max}}$. Fig. \ref{fig:DigesterStates_x0changes} considers the situation in which $T_{\text{max}}$ is fixed ($T_{\text{max}} = 3.5$) while the initial conditions change. In this case, the origin is reached by all the states in approximately $ 3 < T_{\text{max}}$ days regardless of $\tilde{\bm{x}}_0$.  
\begin{figure}
\begin{subfigure}{.24\textwidth}
  \centering
  \includegraphics[width=1\linewidth]{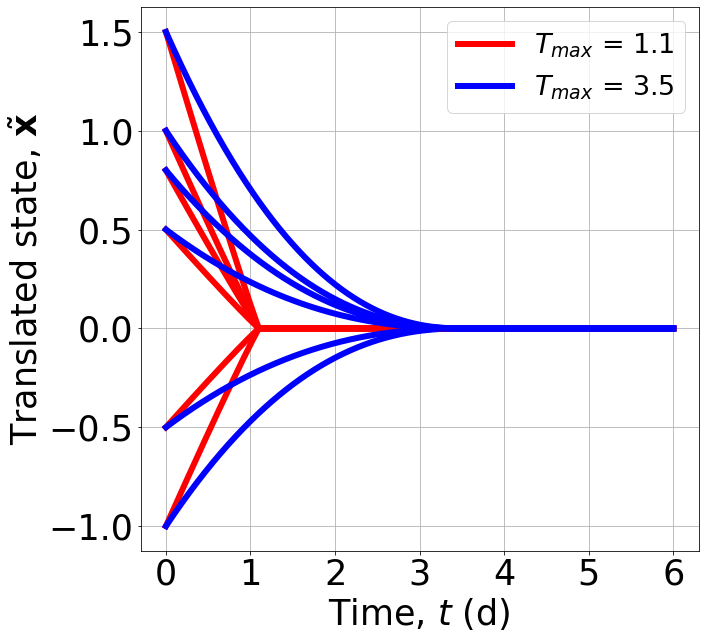}
  \caption{With $\tilde{\bm{x}}_0$ fixed}
  \label{fig:DigesterStates_T_MAXchanges}
\end{subfigure}%
\begin{subfigure}{.24\textwidth}
  \centering
  \includegraphics[width=1\linewidth]{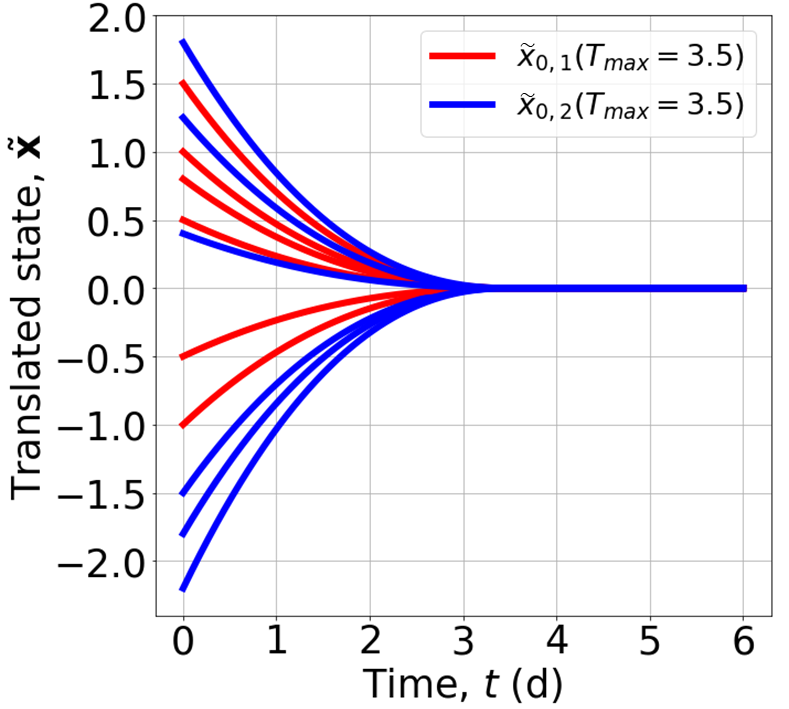}
  \caption{With $T_{\text{max}}$ fixed}
  \label{fig:DigesterStates_x0changes}
\end{subfigure}
\caption{Components of the state of $c^1_{1,1}$ vs. time with the initial-condition-dependent controller (Theorem \ref{th:inCoDepCon}).}
\label{fig:DigesterStates}
\end{figure}
    
Figure \ref{fig:FlowsAlgaeOpenLoop} shows the dynamics of compartments $c^2_{2,2}$, $c^3_{3,3}$, $c^4_{1,2}$, and $c^5_{2,3}$. Specifically, Fig. \ref{fig:Algal_biomass} shows that the algal biomass $X_{\text{ALG}}(t)$ has a first-order dynamics; the equilibrium value is approximately 50 $\mu \text{m}^3$/L (the volume can be converted into mass through the density) and it is reached in 12 days. The flow rate of $\text{CO}_2$ released by the anaerobic digester decreases during the first 3 days as seen in Fig. \ref{fig:m12_dot}, and then converges to 175 mmol$\text{L}^{-1}\text{d}^{-1}$ (note that this value is highly affected by $f_\text{r}$, which quantifies the digester capability to capture its emissions). Figure \ref{fig:m23_dot} shows the microalgae uptake of $\text{CO}_2$ as a nutrient; their absorption rate converges to 0.28 $\mu \text{mol} \mu \text{m}^{-3}\text{d}^{-1}$ after 8 days with a peak of 0.52 $\mu \text{mol} \mu \text{m}^{-3}\text{d}^{-1}$ in $t < 1$ days. The most important result is the accumulation rate of atmospheric carbon dioxide shown in Fig. \ref{fig:m2_accumulRate}, which depends on the adversarial interaction between the carbon digester outflow and the microalgae absorption (see (\ref{eq:AtmAccRate})). 
\begin{remark}
The units used for $\dot{m}_{1,2}$ and $\dot{m}_{2,3}$ are those used by the references that developed the digester and the microalgae dynamical models as indicated in Table \ref{tab:valuesOfParam}. Specifically, the unit is mmol/Ld for the former and $\mu$mol/$\mu \text{m}^3$d for the latter. While they appear to be different, the reader can verify that they are equivalent.   
\end{remark}
Since the microalgae uptake is three orders of magnitude smaller than the digester emissions, it follows that $\frac{\text{d}m_2(t)}{\text{d}t} \approx \dot{m}_{1,2}(t)$.          
\begin{figure*}[t]
\begin{subfigure}{.24\textwidth}
  \centering
  \includegraphics[width=1\linewidth]{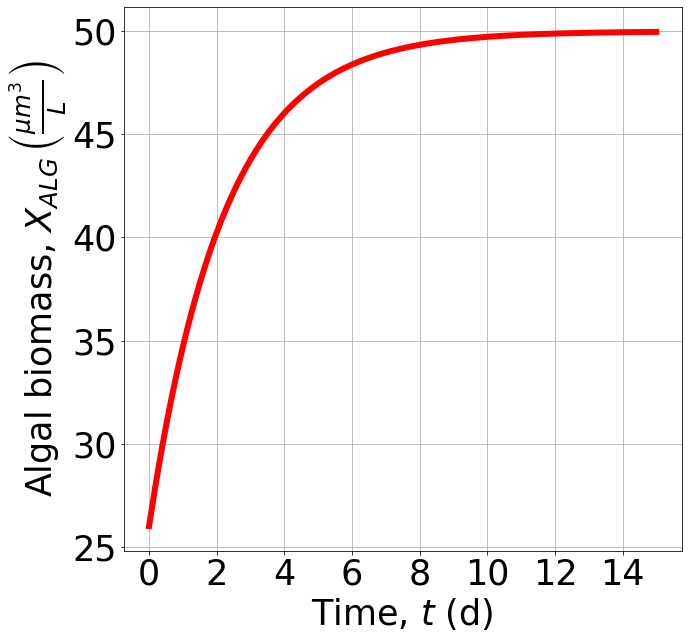}
  \caption{Algal biomass in $c^3_{3,3}$}
  \label{fig:Algal_biomass}
\end{subfigure}%
\begin{subfigure}{.24\textwidth}
  \centering
  \includegraphics[width=1\linewidth]{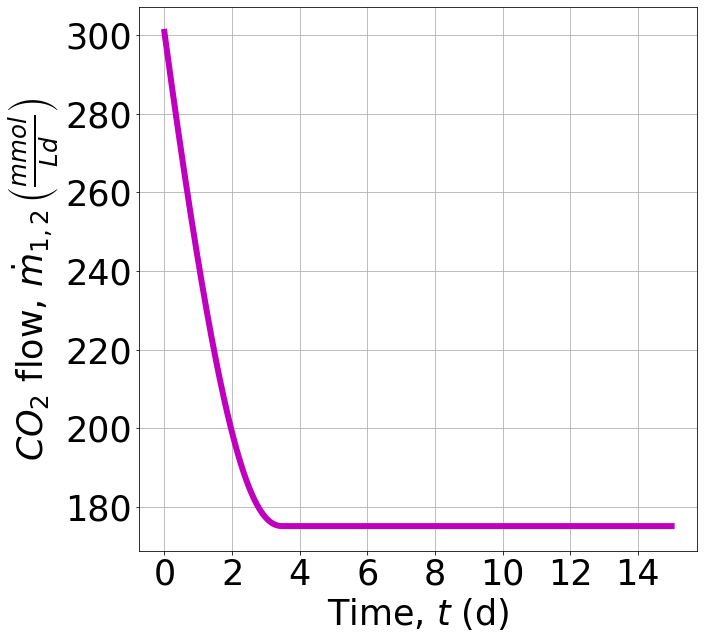}
  \caption{Digester carbon outflow}
  \label{fig:m12_dot}
\end{subfigure} 
\begin{subfigure}{.24\textwidth}
  \centering
  \includegraphics[width=1\linewidth]{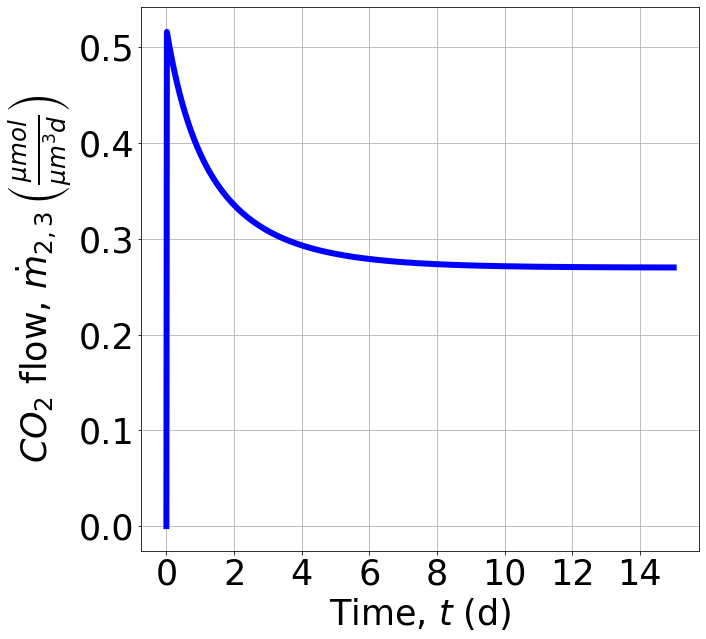}
  \caption{Microalgae carbon uptake}
  \label{fig:m23_dot}
\end{subfigure}
\begin{subfigure}{.24\textwidth}
  \centering
  \includegraphics[width=1\linewidth]{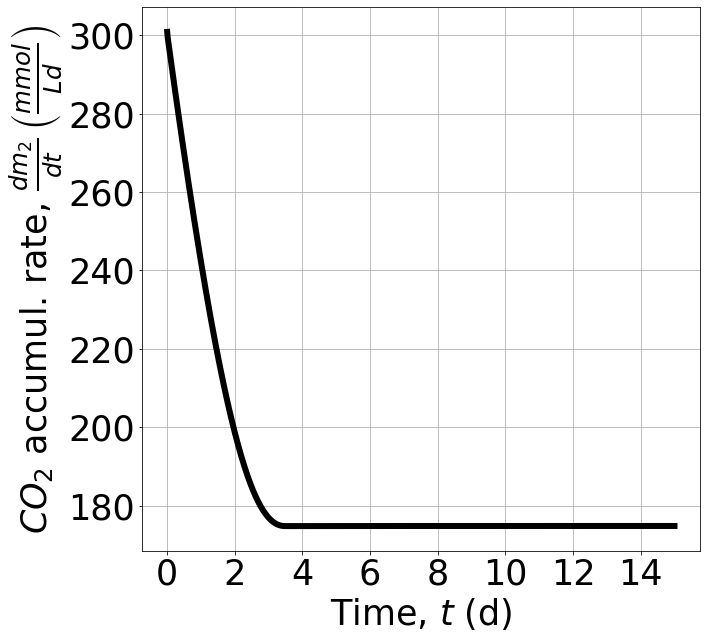}
  \caption{Accumulation rate in $c^2_{2,2}$}
  \label{fig:m2_accumulRate}
\end{subfigure}
\caption{Dynamics of compartments $c^2_{2,2}$, $c^3_{3,3}$, $c^4_{1,2}$, and $c^5_{2,3}$ using the values in Table \ref{tab:valuesOfParam}.}
\label{fig:FlowsAlgaeOpenLoop}
\end{figure*}
Note that the terms in (\ref{eq:AtmAccRate}) are per unit of volume. Hence, by indicating with $V_\text{d}$ the volume of the anaerobic digester and with $V_\text{m}$ the volume of the microalgae cultivation, using (\ref{eq:AtmAccRate}) we can calculate that the volume of the microalgae cultivation required to completely compensate the emissions of each liter of the digester at the steady state (i.e., $V_{\text{d}} = 1$ L) is
\begin{equation}
0 = \overline{\dot{m}}_{1,2} V_\text{d} - \overline{\dot{m}}_{2,3} V_\text{m}, 
\end{equation}
which gives
\begin{equation}\label{eq:ratioOfVolums}
V_\text{m} = \frac{\overline{\dot{m}}_{1,2}}{\overline{\dot{m}}_{2,3}}V_\text{d} = 625 \,\, \text{L},
\end{equation}    
where $\overline{\dot{m}}_{i,j}$ is the steady state of $\dot{m}_{i,j}$. In other words, the anaerobic digester releases to the atmosphere an amount of carbon dioxide that, to be compensated, requires a microalgae cultivation with a volume 625 times bigger than the volume of the digester. This is essentially because the microalgae concentration is very low; another possible factor is the limitation on the light intensity.

\textbf{Circularity analysis:} Next, we address how the circularity $\lambda(\mathcal{N})$ in equation (\ref{eq:circProblem}) is affected. Without microalgae, the circularity yields 
\begin{equation}
\lambda_\text{a} = - \dot{m}_{\text{f,c}}\Delta = - \dot{m}_{1,2}\Delta,
\end{equation}
where $\dot{m}_{\text{f,c}} \geq 0$ is the \emph{net finite-time sustainable} flow and $\Delta = 1$ day \cite{zocco2024circular}. In contrast, with the microalgae, the circularity becomes 
\begin{equation}
\lambda_\text{b} = - \dot{m}_{\text{f,c}}\Delta = - (\dot{m}_{1,2} - \dot{m}_{2,3})\Delta. 
\end{equation}
Since $\lambda_\text{b} > \lambda_\text{a}$, the microalgae increase the circularity of carbon dioxide. In particular, from (\ref{eq:ratioOfVolums}) it follows that a microalgae cultivation with a volume 625 times that of the digester yields $\lambda_\text{b} = 0$, which corresponds to the net zero target \cite{NZdefinition}.

\section{Towards Microalgae Control via Reinforcement Learning}\label{sec:RLcontrol}
In this section, we address the question whether it is possible to increase the carbon uptake of the microalgae, and hence, reduce the above volume factor of 625? To address this question, we needed a control strategy for the microalgae system (\ref{eq:MonodEq1})-(\ref{eq:MonodEq4}) that is able to maximize the carbon uptake, i.e., equation (\ref{eq:carbonUptake}). By choosing the light intensity $I(t)$ as the control input, the system (\ref{eq:MonodEq1})-(\ref{eq:MonodEq4}) is nonaffine in the control, and hence, the control laws considered above, i.e., Theorems \ref{th:baseline} and \ref{th:inCoDepCon}, cannot be used.

Thus, we approached this optimal control problem using RL by considering $I(t)$ as the action to be generated by the RL controller. Since we need to maximize the carbon uptake (\ref{eq:carbonUptake}), we set (\ref{eq:carbonUptake}) as the reward. First, we implemented the Monod's model (\ref{eq:MonodEq1})-(\ref{eq:MonodEq4}) using a state-of-art library for RL, namely, Stable-Baselines3 (SB3) \cite{stable-baselines3}. Specifically, we numerically solved the nonlinear ordinary differential equations (\ref{eq:MonodEq1})-(\ref{eq:MonodEq4}) using Euler's method. To tune the step size for Euler's method, namely, $\delta_\text{t}$, we reduced its value until the numerical solution given by Euler's method matched the one given by the Python solver \emph{scipy.integrate.odeint()} for the same initial conditions and the same input $I(t)$. This yielded $\delta_\text{t} = 0.00005$. A summary of the RL problem set-up is given in Table \ref{tab:RLsetup}.
\begin{table*}
\centering
\caption{RL problem set-up for microalgae control via light intensity $I(t)$.}
\label{tab:RLsetup}
\begin{tabular}{ccccc} 
\hline
Environment model & Integration method & $\delta_\text{t}$ & States & Observations \vspace{0.1cm}\\
Equations (\ref{eq:MonodEq1})-(\ref{eq:MonodEq4}) & Euler's & 0.00005 & $X_{\text{ALG}}(t)$, $S(t)$ & $X_{\text{ALG}}(t)$, $S(t)$ \vspace{0.5cm}\\
\hline
Actions & Initial conditions & Reward & Max episode length & Termination conditions \vspace{0.1cm}\\
$I(t)$ & \makecell{Randomly sampled from \\ uniform distributions} & Equation (\ref{eq:carbonUptake}) & 200 time steps & None \\
\hline
\end{tabular}
\end{table*}

The implementation of the Monod's model in SB3 enabled us to test the RL algorithms available in SB3 compatible with continuous-time actions and observations. We tested eight algorithms for training the RL controller, namely, the A2C \cite{mnih2016asynchronous}, the augmented random search (ARS) \cite{NEURIPS2018_7634ea65}, the CrossQ \cite{bhattcrossq}, the deep deterministic policy gradient (DDPG) \cite{lillicrap2019continuouscontroldeepreinforcement}, the proximal policy optimization (PPO) \cite{schulman2017proximal},  the soft actor-critic (SAC) \cite{haarnoja2018soft}, the truncated quantile critics (TQC) \cite{kuznetsov2020controlling}, and the trust region policy optimization (TRPO) \cite{pmlr-v37-schulman15}. The algorithms are summarized in Table \ref{tab:RLtunings} along with their tuning. Each algorithm trained the microalgae controller for 200,000 time steps with a maximum episode length of 200 time steps and with no termination conditions. At the end of each episode, the new initial conditions were randomly sampled from uniform distributions. All the executions were performed in Google Colaboratory (aka Colab) using an NVIDIA Tesla T4 GPU. 
\begin{table*}
\centering
\caption{Tunings of the RL algorithms. Except for the policy models, the tunings are the default settings implemented on Stable-Baselines3 \cite{stable-baselines3}.}
\label{tab:RLtunings}
\begin{tabular}{cc} 
\hline
A2C & \makecell{Policy model: multilayer perceptron; learning rate: 0.0007; number of steps: 5; discount factor: 0.99; entropy coefficient for the \\ loss calculation: 0; value function coefficient for the loss calculation: 0.5; optimizer: RMSprop.} \\
\hline
ARS & \makecell{Policy model: multilayer perceptron; number of random perturbations of the policy to try at each update step: 8; learning rate: 0.02; \\ exploration noise: 0.05; the passed policy has the weights zeroed before training: yes; alive bonus offset: 0; number of episodes \\ to evaluate each candidate: 1.} \\
\hline
CrossQ & \makecell{Policy model: multilayer perceptron; learning rate: 0.001; size of the replay buffer: 1000000; number of steps of the model to collect \\ transitions for before learning starts: 100; batch size: 256; discount factor: 0.99; number of steps to update the model: 1; \\ entropy regularization coefficient learned automatically: yes; policy delay: 3.}\\
\hline
DDPG & \makecell{Policy model: multilayer perceptron; learning rate: 0.001; size of the replay buffer: 1000000; number of steps of the model to collect \\ transitions for before learning starts: 100; batch size: 256; soft update coefficient: 0.005;  \\ discount factor: 0.99; optimizer: Adam; number of steps to update the model: 1.} \\
\hline
PPO & \makecell{Policy model: multilayer perceptron; learning rate: 0.0003; number of steps to update: 2048; batch size: 64; number of epochs when \\ optimizing the surrogate loss: 10; discount factor: 0.99; entropy coefficient for the loss calculation: 0; value function \\ coefficient for the loss calculation: 0.5; maximum value for the gradient clipping: 0.5; use of generalized state dependent \\ exploration (gSDE): no.}\\
\hline
SAC & \makecell{Policy model: multilayer perceptron; learning rate: 0.0003; size of the replay buffer: 1000000;  number of steps of the model to collect \\ transitions for before learning starts: 100; batch size: 256; soft update coefficient: 0.005; discount factor: 0.99; number of steps \\ to update the model: 1; entropy regularization coefficient learned automatically: yes.}\\
\hline
TQC &  \makecell{Policy model: multilayer perceptron; learning rate: 0.0003; size of the replay buffer: 1000000; number of steps of the model to collect \\ transitions for before learning starts: 100; batch size: 256; soft update coefficient: 0.005; discount factor: 0.99; number of steps \\ to update the model: 1; entropy regularization coefficient learned automatically: yes.}\\
\hline
TRPO & \makecell{Policy model: multilayer perceptron; learning rate: 0.001; number of steps to update: 2048; batch size: 128; discount factor: 0.99; \\ maximum number of steps in the Conjugate Gradient algorithm for computing the Hessian vector product: 15; damping in the \\ Hessian vector product computation: 0.1; step-size reduction factor for the line-search: 0.8; maximum number of iteration \\ for the backtracking line-search: 10; number of critic updates per policy update: 10; factor for trade-off of bias vs. variance for \\ Generalized Advantage Estimator: 0.95; target Kullback-Leibler divergence between updates: 0.01.}\\
\hline
\end{tabular}
\end{table*}

The performance of each algorithm for a single run is reported in Table \ref{tab:RLresults}, where
\begin{equation}
\Delta = r_\text{e} - r_\text{s},
\end{equation}
with $r_\text{e}$ and $r_\text{s}$ denoting the mean reward of an episode at the end and at the start of the training, respectively. Each algorithm was tested only once with the exception of DDPG because the first run did not learn within the 200,000 time steps. Overall, the ARS showed a training time significantly smaller than the other methods (1 min 49 sec) and also a final reward $r_\text{e}$ close to the highest (143.0 vs. 143.3, respectively), thus, it is the best performing algorithm according to these single runs followed by A2C. The algorithm with the worst performance is CrossQ as it took 67 minutes and 58 seconds to reach a final reward of just 134.6. 

Note that multiple runs of each RL algorithm should be executed to capture their performance variations, which are a result of the stochasticity of the learning process \cite{NEURIPS2018_7634ea65}. Note also that the training time for an algorithm can differ from one run to another depending on the computational load experienced by the Colab server as visible from the three runs of DDPG. However, the training time variation should be of the order of 5 minutes for a run of 30 minutes and decrease with shorter executions. For example, the execution of ARS (1 min 49 sec) should have a variation of the order of 20 seconds, which confirms that its speed is higher than the other algorithms in this case.      
\begin{table}
\centering
\caption{Training time, $r_\text{s}$, $r_\text{e}$, and $\Delta$ for each RL algorithm. Best values are in bold.}
\label{tab:RLresults}
\begin{tabular}{ccccc} 
 & Tr. time (min:sec) & $r_\text{s}$ & $r_\text{e}$ & $\Delta$ \\ 
\hline
A2C & 9:24 & 112.1 & 142.3 & 30.2 \\
ARS & \textbf{1:49} & 116.2 & 143.0 & 26.8 \\
CrossQ & 67:58 & 112.9 & 134.6 & 21.7\\
\hline
\multirow{3}{*}{DDPG} & 30:42 & 98.0 & 95.4 & -2.6\\
	& 27:44 & 140.1 & \textbf{143.3} & 3.2 \\
	& 26:43 & 136.6 & 142.5 & 5.9\\
\hline
PPO & 7:55 & 116.7 & 134.6 & 17.9\\
SAC & 47:25 & 116.9 & 137.9 & 21.0\\
TQC & 50:19 & 117.0 & 138.0 & 21.0 \\
TRPO & 5:54 & 117.4 & 136.3 & 18.9 \\
\hline
\end{tabular}
\end{table}

Now, let us recall the question we asked at the beginning of this section, i.e., whether it is possible to increase the microalgae uptake of carbon via a proper control strategy. The results in Table \ref{tab:RLresults} show that, after a training phase, the RL controllers learned to generate a light intensity that increases the reward, i.e., the carbon uptake (\ref{eq:carbonUptake}). While this performance of RL is very promising for the design of intelligent autonomous algal systems for carbon control, further investigation is needed to properly understand the behavior of the cultivation when interacting with the RL agent.

\section{Conclusion}\label{sec:concl}
In this paper, we approached net zero as a network control problem, where the source and the sink of carbon dioxide are independently regulated. Since the source, i.e., the anaerobic digester, is an affine-in-control system, we developed an initial-condition-dependent stabilizing controller whose tuning simply requires the desired settling time, but it also requires the existence of $\bm{G}^{-1}(\tilde{\bm{x}})$.

For the sink, i.e., the microalgae system, which is nonaffine-in-control, we designed and compared eight reinforcement-learning (RL) controllers trained to maximize the uptake of carbon dioxide through the light intensity. The controller trained via ARS showed the highest training speed and a competitive final reward, and hence, it was the best algorithm according to the performed single runs. Multiple runs of each RL algorithm should be executed to capture their performance variations, which are a result of the stochasticity of the learning process.     

Overall, while the RL approach to control showed to be more versatile than the nonlinear classical controller, providing finite-time stabilization guarantees with RL is a topic less mature than that predicated on Lyapunov-based nonlinear control. This is related to the wider challenge of explainable RL \cite{milani2024explainable}, which currently makes RL less ready for deployment in real systems compared to Lyapunov-based nonlinear controllers, especially in critical applications. 

With respect to net zero, this work demonstrated the important role that nonlinear and learning-based control can play to reach net zero, especially for improving the efficiency of natural carbon sinks such as microalgae. 

Future work could be providing finite-time stabilization guarantees with RL algorithms, e.g., with ARS, and investigate the behavior of the algal cultivation when controlled by an RL agent towards the creation of intelligent autonomous algal systems for carbon control. Developing and maintaining software libraries for TMN modeling and control is another future research direction.

\ifCLASSOPTIONcaptionsoff
  \newpage
\fi

\bibliographystyle{IEEEtran}
\bibliography{References}

\end{document}